\newtheorem{thm}{Theorem}[section]
\newtheorem{prop}[thm]{Proposition}
\newtheorem{cor}[thm]{Corollary}
\newtheorem{defi}[thm]{Definition}
\newtheorem{rem}[thm]{Remark}
\title{$k$-edge geodetic graphs}
\author[1]{Satyam Guragain\thanks{Email: shatym17@gmail.com.}}
\author[1]{Ravi Srivastava\thanks{Corresponding author, email: ravi@nitsikkim.ac.in.}}
\affil[1]{Department of Mathematics, National Institute of Technology Sikkim, South Sikkim 737139, India}
\date{}
\pgfplotsset{compat=1.18}
\begin{document}

\maketitle
\begin{abstract}
    A graph $G$ is $k$-edge geodetic graph if every edge of $G$ lies in at least one geodesic of length $k$. We studied some basic properties of $k$-edge geodetic graphs. We investigated the $k$ edge-geodeticity of complete bipartite graph $K_{m,n}$ and provide the minimum number of largest fixed order path that can cover $K_{m,n}$. We also studied the $k$-edge geodeticity of tree and the product graphs like Cartesian product, Strong product, Corona product, and provide the bounds for the minimum number of the largest fixed order path that can cover the graph.
\end{abstract}
\textbf{Keywords:} Distance, $k$-geodesic, generator, Cartesian product, strong product, Corona product\\ \\
\textbf{MSC Classification 2020:} 05C12, 68Q17
\section{Introduction}
 A graph $G$ is defined as an ordered pair $G=(V,E)$ where $V$ represents the set of vertices and $E$ represents the set of edges. In this work we only considered simple undirected graph, $i.e$, graphs without loops and multi-edges. The degree of a vertex $v$ is the number of edges incident to it and is denoted as $d(v)$. Eccentricity of a vertex $v$, denoted as $e(v)$, is the maximum distance of the vertex $v$ from the remaining vertices. A path in a graph is a finite of edges which joins a sequence of vertices which are all distinct. If the first and end vertices of a path is joined by an edge then the graph formed is a cycle. A tree is a connected graph without any cycle. A complete bipartite graph, denoted as $K_{n,m}$ is a type of bipartite graph where the vertex set can be divided into two disjoint sets $U$ and $V$ with $|U|=n$ and $|V|=m$ such that every edge in the graph connects a vertex in set $U$ to a vertex in set $V$ and every possible pair of vertices (one from set $U$ and one from set $V$) is connected by an edge.
 
Covering problems are among the fundamental problems in graph theory specially the vertex cover problem,
the edge cover problem, and the clique cover problem. An important subclass of covering problems is formed by the geodesic covering problem, i.e, coverings with shortest paths (also known as geodesics), e.g. the optimal transport flow in social networks requires an intensive study of geodesics \cite{appert2007measuring,jiang2004structural,thomson1995graph}. We introduce and study a related problem that we call $k$-edge geodetic problem.

Harary et al. \cite{harary1993geodetic} introduced the concept of the geodetic number of a graph, which serves as a measure of the connectivity and distribution of vertices within the graph. This concept plays a crucial role in understanding the minimum number of vertices required to ensure that all other vertices in the graph are part of some shortest path between them. Harary's work laid the foundation for subsequent research in this area, sparking interest in further exploring the properties and applications of geodetic numbers in different types of graphs. Santhakumaran and John \cite{santhakumaran2007edge} expanded on Harary's work by introducing the concept of the edge-geodetic number of a graph. The edge-geodetic number is particularly useful in analyzing network robustness and the efficiency of communication pathways in graphs. Manuel et al. further contributed to the field by defining the strong edge-geodetic cover \cite{manuel2017strong} and strong geodetic cover \cite{manuel2018strong} of a graph. These concepts emphasize the uniqueness of paths between vertices. Xavier et al. \cite{xavier2020some} explored the strong edge-geodetic number in the context of Cartesian products of complete graphs, providing bounds for this number. Gajavalli and Greeni \cite{gajavalli2024strong} continued the exploration of strong edge-geodetic numbers by examining lexicographic products of various graph types, including complete graphs ($K_m$), paths ($P_m$), cycles ($C_m$), and star graphs ($K_{1,m}$).
\section{Preliminary}
Consider a graph $G(V(G),E(G))$, with order $|V(G)|$ and size $|E(G)|$. A $(x -y)$ geodesic is the shortest path between the vertices $x$ and $y$. For a graph $G$, the length of the
maximum geodesic is called the graph diameter, denoted as $diam(G)$. Harary et al. introduced
a graph theoretical parameter in \cite{chartrand2002geodetic} called the geodetic number of a graph and it was further
studied in \cite{chartrand2002geodeticnew}. In \cite{chartrand2002geodetic} the geodetic number of a graph is defined as follows, let $I[u,v]$ be the set of all vertices lying on some $u-v$ geodesic of $G$ and $I[S]=\bigcup\limits_{u,v\in S}I[u,v]$. The set $S$ is called a geodetic set of $G$ if $I[S]=V(G)$. The cardinality of the minimum geodetic set of $G$ is the geodetic number $g(G)$ of $G$. Santhakumaran and John in \cite{santhakumaran2007edge} defined edge-geodetic set as, let $E[u,v]$ be the set of all edges lying on some $u-v$ geodesic of $G$, and for some non-empty set $S\subseteq V(G)$, $E[S]=\bigcup\limits_{u,v\in S}E[u,v]$. The set $S$ is called an edge-geodetic set of $G$ if $E[S]=E(G)$. The cardinality of the minimum edge-geodetic of $G$ is the edge-geodetic number $g_e(G)$ of $G$. Strong geodetic problem is a
variation of geodetic problem and is defined in \cite{manuel2018strong} as follows. For a graph $G(V(G),E(G))$, given
a set $S \subseteq V(G)$, for each pair of vertices $(x, y) \subseteq S$, $x \neq y$, let $\tilde{g}(x, y)$ be a selected fixed shortest
path between $x$ and $y$. Let $\tilde{I}(S) = \{\tilde{g}(x, y) : x, y \in S\}$ and $V(\tilde{I}(S)) =\bigcup\limits_{P\in \tilde{I}(S)}V(P)$. If $V(\tilde{I}(S)) = V(G)$
for some $\tilde{I}(S)$, then $S$ is called a strong geodetic set. The cardinality of the minimum strong
geodetic set is the strong geodetic number of $G$ and is denoted by $sg(G)$. The edge version of the
strong geodetic problem is defined in \cite{manuel2017strong} i.e. for a graph $G(V(G),E(G))$, a set $S \subseteq V(G)$ is called
a strong edge geodetic set if for any pair $x, y \in S$ a shortest path $P_{xy}$ can be assigned such that $\bigcup\limits_{x,y\in S}E(P_{xy}) = E(G)$. The cardinality of the smallest strong edge geodetic set of $G$ is called
the strong edge geodetic number and is denoted as $sg_e(G)$.

\section{Main results}
\begin{defi}
     A path $P$ in a graph $G$ is called $k$-geodesic if it is a shortest path of length $k$ between two vertices. 
 \end{defi}
 \begin{defi}
     A graph $G$ is $k$-edge geodetic graph if every edge of $G$ lies in at least one $k$-geodesic of $G$.
 \end{defi}
 \begin{rem}
     If a graph is $k$-edge geodetic than it is also $i$-edge geodetic for all $1\leq i \leq k$.
 \end{rem}
 \noindent We will use $[n]$-edge geodetic to represent a graph which is only $n$-edge geodetic but not $n+1$-edge geodetic.
If a graph $G$ is $[k]$-edge geodetic then the set of all $i$-geodesic of $G$ covers $G$ (for $i=1,2,\ldots,k$). The minimal set of $i$-geodesic which can cover $G$ is the $i$-geodesic generator of graph $G$ denoted as $gen_i(G)$. Also, we denote $gen_k(G)$ by $gen(G)$.
 \begin{prop}
     A cycle $C_{2n}$ is $[n]$-edge geodetic with $|gen(C_{2n})|=2$. 
 \end{prop}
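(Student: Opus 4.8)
The plan is to fix a cyclic labelling of the vertices of $C_{2n}$ as $v_0, v_1, \ldots, v_{2n-1}$, with edges $\{v_i, v_{i+1}\}$ and all indices read modulo $2n$, and to exploit the fact that the distance between $v_i$ and $v_j$ equals $\min(|i-j|,\, 2n-|i-j|)$. The first step is to record that $diam(C_{2n}) = n$, attained exactly by the antipodal pairs $v_i, v_{i+n}$, and that for each such pair there are precisely two shortest paths of length $n$, namely the two arcs of the cycle. These are therefore exactly the $n$-geodesics of $C_{2n}$, since any geodesic of length $n$ must join two vertices at distance $n$.

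With this in hand I would first establish that $C_{2n}$ is $[n]$-edge geodetic. For the positive part, given an edge $e=\{v_i, v_{i+1}\}$, I exhibit an $n$-geodesic through $e$: the arc $v_i, v_{i+1}, \ldots, v_{i+n}$ is a shortest path between the antipodal pair $v_i, v_{i+n}$ and contains $e$, so every edge lies in some $n$-geodesic. For the negative part, since $diam(C_{2n}) = n$ there is no pair of vertices at distance $n+1$, hence $C_{2n}$ admits no $(n+1)$-geodesic at all and in particular is not $(n+1)$-edge geodetic. Together these two observations give the $[n]$ classification.

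It then remains to compute $|gen(C_{2n})|$. For the lower bound I count edges: each $n$-geodesic is a path on exactly $n$ edges, while $C_{2n}$ has $2n$ edges, so any family of $n$-geodesics covering $C_{2n}$ has size at least $2n/n = 2$. For the matching upper bound I exhibit two geodesics whose edge sets partition $E(C_{2n})$: taking the antipodal pair $v_0, v_n$, its two $n$-geodesics are the arcs $v_0, v_1, \ldots, v_n$ and $v_0, v_{2n-1}, \ldots, v_n$, which are edge-disjoint and together use all $2n$ edges. Hence $|gen(C_{2n})| = 2$.

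The argument is short and most of the steps are bookkeeping; the only point requiring care is the negative half of the classification, where one must note that ``not $(n+1)$-edge geodetic'' follows not from an awkward covering argument but simply from the nonexistence of geodesics of length $n+1$, itself an immediate consequence of the diameter bound. The explicit two-geodesic cover then makes both the covering property and the minimality transparent in a single construction.
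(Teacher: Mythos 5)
Your proof is correct. The paper states this proposition without proof, treating it as elementary, and your argument is exactly the natural verification that would be intended: the diameter bound rules out $(n+1)$-geodesics, the antipodal arcs give the covering, and the edge-count $2n/n=2$ together with the two edge-disjoint arcs between $v_0$ and $v_n$ pins down $|gen(C_{2n})|=2$. No gaps; all steps check out against the paper's definitions of $[k]$-edge geodetic and $gen(G)$.
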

 \begin{prop}
     A path $P_{n}$ is $[n]$-edge geodetic with $|gen(P_{n})|=1$. 
 \end{prop}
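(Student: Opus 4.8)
The plan is to use the defining feature of a path: between any two of its vertices there is a unique path, so every geodesic of $P_n$ is just a subpath, and the longest subpath is $P_n$ itself. Label the vertices of $P_n$ along the path as $v_0,v_1,\dots,v_n$ with edges $e_i=v_{i-1}v_i$ for $1\le i\le n$; then $P_n$ has length $n$, the distance between $v_i$ and $v_j$ equals $|i-j|$, and hence $diam(P_n)=n$.

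First I would describe all the $n$-geodesics. Since the only $v_i$--$v_j$ path is the subpath joining them, a $k$-geodesic of $P_n$ is exactly a subpath $v_i v_{i+1}\cdots v_{i+k}$ of length $k$. For $k=n$ the sole possibility is $i=0$, i.e. $P_n$ itself, and this single geodesic contains every edge $e_1,\dots,e_n$. Consequently every edge lies in an $n$-geodesic, so $P_n$ is $n$-edge geodetic, and the one-element collection $\{P_n\}$ already covers the graph, giving $|gen(P_n)|\le 1$. As any generator must contain at least one geodesic in order to cover the (nonempty) edge set, $|gen(P_n)|\ge 1$, and equality follows.

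It then remains to verify that $P_n$ is not $(n+1)$-edge geodetic, which is exactly what the bracket notation $[n]$ records. Here I would simply invoke $diam(P_n)=n$: no two vertices are at distance $n+1$, so $P_n$ admits no geodesic of length $n+1$ whatsoever, and therefore no edge can lie in one. Thus $P_n$ is $n$-edge geodetic but not $(n+1)$-edge geodetic, i.e. $[n]$-edge geodetic.

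There is no genuine obstacle here; the proposition is essentially a direct consequence of path-uniqueness. The only point needing care is the initial bookkeeping that fixes the distance formula $|i-j|$ and the uniqueness of the maximal geodesic, since from these both the generator count and the diameter bound drop out at once.
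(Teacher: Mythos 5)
Your proof is correct: the paper states this proposition without any proof at all, and your argument---uniqueness of paths in a tree forces every geodesic to be a subpath, the whole path is the unique $n$-geodesic and covers every edge (giving $|gen(P_n)|=1$), while $diam(P_n)=n$ rules out any $(n+1)$-geodesic---is precisely the routine verification the paper leaves implicit. The only point worth flagging is your (correct) choice of convention: for the statement to hold, $P_n$ must denote the path of length $n$, i.e.\ on $n+1$ vertices, which is the reading the paper's claim forces.
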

 \begin{prop}
     A cycle $C_{2n+1}$ is $[n]$-edge geodetic with $|gen(C_{2n+1})|=3$. 
 \end{prop}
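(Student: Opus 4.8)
The plan is to label the vertices of $C_{2n+1}$ cyclically as $v_0, v_1, \ldots, v_{2n}$ with edges $e_i = v_i v_{i+1}$, indices taken modulo $2n+1$, and to first pin down exactly what the $n$-geodesics of $C_{2n+1}$ look like. Since the two arcs between any pair of vertices of $C_{2n+1}$ have lengths summing to the odd number $2n+1$, they can never be equal, so every pair of vertices is joined by a unique shortest path, and the diameter is $\lfloor (2n+1)/2 \rfloor = n$. Consequently an $n$-geodesic is precisely a block of $n$ consecutive edges $e_i, e_{i+1}, \ldots, e_{i+n-1}$ (its endpoints are at distance $n$, the complementary arc having length $n+1 > n$), and there is no geodesic of length $n+1$.

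First I would establish that $C_{2n+1}$ is $[n]$-edge geodetic. For the $n$-edge-geodeticity, note that every edge $e_i$ lies in the $n$-geodesic formed by the block $e_i, e_{i+1}, \ldots, e_{i+n-1}$, so every edge is covered by some $n$-geodesic. That $C_{2n+1}$ is not $(n+1)$-edge geodetic is immediate from the diameter computation: no geodesic has length $n+1$, hence no edge can lie on one.

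Next I would bound $|gen(C_{2n+1})|$. Each $n$-geodesic is a block of exactly $n$ edges, so it covers at most $n$ of the $2n+1$ edges; hence any covering family has size at least $\lceil (2n+1)/n \rceil = 3$. For the matching upper bound I would exhibit three blocks covering all edges, e.g. $\{e_0,\ldots,e_{n-1}\}$, $\{e_n,\ldots,e_{2n-1}\}$, and a third block containing the single remaining edge $e_{2n}$, say $\{e_{2n}, e_0, \ldots, e_{n-2}\}$; together these three $n$-geodesics contain every $e_i$, giving $|gen(C_{2n+1})| \le 3$. Combining the two bounds yields the claimed value $3$.

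The counting and the explicit construction are routine; the one point needing genuine care is the contrast with the even case. Here the arithmetic $2n+1 = 2\cdot n + 1$ forces a third geodesic: two blocks of length $n$ cover at most $2n$ edges, strictly fewer than $2n+1$, and because geodesics in an odd cycle are unique one cannot exploit a second antipodal path as in $C_{2n}$. I would therefore make the lower bound argument explicitly use that each geodesic is a single block of at most $n$ edges, and I would double-check the boundary case $n = 1$ (the triangle $C_3$, where each of the three edges is itself a $1$-geodesic and the three are pairwise distinct) to confirm the formula holds there as well.
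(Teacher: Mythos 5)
Your proof is correct. Note that the paper states this proposition without any proof at all, so there is no argument to compare against; your write-up supplies exactly the details the authors presumably had in mind: uniqueness of geodesics in an odd cycle, diameter $n$ (hence no $(n+1)$-geodesics), every edge lying in a block of $n$ consecutive edges, the counting bound $\left\lceil (2n+1)/n \right\rceil = 3$, and an explicit three-block cover. The check of the boundary case $n=1$ is a worthwhile addition.
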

 \begin{thm}
     A complete bipartite graph $K_{n,m}$ is $[2]$-edge geodetic unles $n,m=1$ with $|gen(K_{n,m})|= \left\lceil \frac{mn}{2} \right\rceil$. Here, $\left\lceil x \right\rceil$ represent least integer greater or equal to $x$.
 \end{thm}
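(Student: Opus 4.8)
The plan is to separate the two assertions of the theorem: that $K_{n,m}$ is $[2]$-edge geodetic, and that $|gen(K_{n,m})|=\left\lceil \frac{mn}{2}\right\rceil$. The whole argument rests on one structural observation that I would record first. Since $K_{n,m}$ is bipartite with parts $U$ and $V$, any path of length $2$ has its two end-vertices in the same part, hence they are non-adjacent and at distance $2$; consequently \emph{every} path of length $2$ in $K_{n,m}$ is automatically a $2$-geodesic, and conversely a $2$-geodesic is just a length-$2$ path $x$--$w$--$y$ whose two edges share the middle vertex $w$.

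For the first assertion I would note that whenever $K_{n,m}\neq K_{1,1}$ the diameter is $2$: two vertices in different parts are adjacent, and two vertices in the same part share a common neighbour in the other part. To see that $K_{n,m}$ is $2$-edge geodetic, take any edge $uv$ with $u\in U$, $v\in V$; since we are not in the case $n=m=1$, at least one part has a second vertex, say $u'\in U$ with $u'\neq u$ (or symmetrically some $v'\in V$), and then $u$--$v$--$u'$ is a length-$2$ path, hence a $2$-geodesic, through $uv$. Finally, because $diam(K_{n,m})=2$ there is no shortest path of length $3$ at all, so $K_{n,m}$ is not $3$-edge geodetic; together these give that it is $[2]$-edge geodetic.

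The structural observation reduces the computation of $|gen(K_{n,m})|$ to a purely combinatorial covering problem: find the minimum number of length-$2$ paths (that is, $P_3$'s) needed so that every edge lies in at least one of them. The lower bound is immediate, since each such path contains exactly two edges and covering all $mn$ edges therefore requires at least $\lceil mn/2\rceil$ of them. For the matching upper bound I would model $E(K_{n,m})$ as the cells of an $n\times m$ array, identifying the edge $u_iv_j$ with the cell $(i,j)$; then a $2$-geodesic of type $v_j$--$u_i$--$v_{j'}$ is a pair of cells in a common row, and one of type $u_i$--$v_j$--$u_{i'}$ is a pair of cells in a common column. When $mn$ is even, at least one of $m,n$ is even, and I would pair the cells within each line of that (even) dimension, using exactly $mn/2$ paths. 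When $mn$ is odd both $m$ and $n$ are odd; here I would pair $n-1$ cells of the last column into $(n-1)/2$ vertical pairs, pair the remaining $n\times(m-1)$ sub-array row-by-row into $n(m-1)/2$ horizontal pairs, and finally cover the single leftover cell by one additional overlapping length-$2$ path, for a total of $\frac{n-1}{2}+\frac{n(m-1)}{2}+1=\frac{mn+1}{2}=\lceil mn/2\rceil$ paths, meeting the lower bound.

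The step I expect to require the most care is this upper-bound construction in the odd case, where one cannot partition the edge set perfectly and must instead allow exactly one edge to be covered twice; I would also check the small and degenerate cases separately, in particular the star $K_{1,m}$ (all edges share the centre, so any two of them form a $2$-geodesic and the count $\lceil m/2\rceil$ is realised directly) to be sure the general construction does not silently assume $n,m\geq 2$. As a cleaner alternative for the even case, one may simply invoke the classical fact that a connected graph decomposes into paths of length $2$ if and only if it has an even number of edges, applied to the connected graph $K_{n,m}$.
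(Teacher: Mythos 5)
Your proposal is correct and follows essentially the same route as the paper: both establish the upper bound by explicitly pairing the $mn$ edges into $2$-paths sharing a middle vertex (your even/odd-$mn$ split via the $n\times m$ array corresponds exactly to the paper's three parity cases, including the single unavoidable overlap when $mn$ is odd). Your write-up is in fact a bit tighter than the paper's, since you state the counting lower bound $|gen(K_{n,m})|\geq \left\lceil \frac{mn}{2}\right\rceil$ explicitly and record the observation that every $2$-path in a bipartite graph is automatically a geodesic, both of which the paper leaves implicit.
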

 \begin{proof}
     Clearly, $K_{n,m}$ is $[2]$-edge geodetic unless $n,m=1$.\\
     To prove second part part we will consider three cases. Let $X$ and $Y$ denotes the partite set of vertices of $K_{n,m}$.\\
     Case I: Both $n=2a$ and $m=2b$ are even. $X=\{u_1,u_2,\ldots,u_{2a}\}$ and $Y=\{v_1,v_2,\ldots,v_{2b}\}$. Then, $$\bigcup_{j=1}^{b}\bigcup_{i=1}^{2a}v_{2j-1}u_{i}v_{2j}$$ covers all the edges of $K_{n,m}$. Also, for all $j=1,2,\ldots,b$ and $i=1,2,\ldots,2a$, $v_{2j-1}u_{i}v_{2j}$ have no common edge pairwise. So, $|gen(K_{n,m})|=2ab=\left\lceil \frac{mn}{2} \right\rceil$.\\
     Case II: One of $n,m$ is even and other is odd. $n=2a$, $m=2b+1$ and $X=\{u_1,u_2,\ldots,u_{2a}\}$, $Y=\{v_1,v_2,\ldots,v_{2b+1}\}$. Then, $$\bigcup_{j=1}^{b}\bigcup_{i=1}^{2a}v_{2j-1}u_{i}v_{2j}$$ covers all the edges of $K_{n,m}-v_{2b+1}$. By case I, $|gen(K_{n,m}-v_{2b+1})|=2ab$. Remaining edges $\{u_1v_{2b+1},u_2v_{2b+1},\ldots,u_{2a}v_{2b+1}\}$ in $K_{n,m}$ is covered by $a$ number of $2$-geodesics $u_1v_{2b+1}u_2, u_3v_{2b+1}u_4,\ldots,u_{2a-1}v_{2b+1}u_{2a}$. Thus,
     \begin{equation*}
         \left\{\bigcup_{j=1}^{b}\bigcup_{i=1}^{2a}v_{2j-1}u_{i}v_{2j}\right\}\bigcup\left\{\bigcup_{i=1}^{a}u_{2i-1}v_{2b+1}u_{2i}\right\}
     \end{equation*}
     covers all the edges of $K_{n,m}$ and also all these $2$-geodesics have no common edge pairwise. So, $|gen(K_{n,m})|=2ab+a=\left\lceil \frac{mn}{2} \right\rceil$.\\
     Case III: Both $n$ and $m$ are odd. $n=2a+1$, $m=2b+1$ and $X=\{u_1,u_2,\ldots,u_{2a+1}\}$, $Y=\{v_1,v_2,\ldots,v_{2b+1}\}$. By case II, $$\left\{\bigcup_{j=1}^{b}\bigcup_{i=1}^{2a}v_{2j-1}u_{i}v_{2j}\right\}\bigcup\left\{\bigcup_{i=1}^{a}u_{2i-1}v_{v_{2b+1}u_{2i}}\right\}$$ covers all the edges of $K_{n,m}-u_{2a+1}$. So, $|gen(K_{n,m}-u_{2a+1})|=a(2b+1)$. As, $|E(k_{n,m})|=(2a+1)(2b+1)$ (odd), we cannot cover all the edges of $K_{n,m}$ with $2$-geodesics that have no common edge pairwise, i.e, there exists at least one pair of $2$-geodesics which have a common  edge. Remaining edges $\{v_1u_{2a+1},v_2u_{2a+1},\ldots,v_{2b+1}u_{2a+1}\}$ in $K_{n,m}$ is covered by $b+1$ number of $2$-geodesics $v_1u_{2s+1}v_2, v_3u_{2s+1}v_4,\ldots,v_{2b-1}u_{2a+1}v_{2b}$. Thus,
     \begin{equation*}
         \left\{\bigcup_{j=1}^{b}\bigcup_{i=1}^{2a}v_{2j-1}u_{i}v_{2j}\right\}\bigcup\left\{\bigcup_{i=1}^{a}u_{2i-1}v_{2b+1}u_{2i}\right\}\bigcup\left\{\bigcup_{j=1}^{b}v_{2j-1}u_{2a+1}v_{2j}\right\}
     \end{equation*}
     covers all the edges of $K_{n,m}$ and also all these $2$-geodesics have no common edge pairwise except for pair $(v_{2b-1}u_{2a+1}v_{2b},v_{2b}u_{2a+1}v_{2b+1})$. So, $|gen(K_{n,m})|=a(2b+1)+b+1=\frac{(2a+1)(2b+1)+1}{2}=\left\lceil \frac{mn}{2} \right\rceil$.
 \end{proof}
 \begin{thm}\label{thmtree}
     A tree $T$ is $[k]$-edge geodetic where $k=\min\limits_{v \in S_T}e(v)$, $S_T$ is the set of pendant vertex of $T$.
 \end{thm}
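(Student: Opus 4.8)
The plan is to use the defining feature of a tree: between any two vertices there is a unique path, so every geodesic of $T$ is simply a path, and "an edge lies on a $k$-geodesic'' means exactly "the edge lies on a path of length $k$''. Writing $k=\min_{v\in S_T}e(v)$, proving that $T$ is $[k]$-edge geodetic amounts to establishing two things: (i) every edge lies on some path of length $k$ (so $T$ is $k$-edge geodetic), and (ii) some edge lies on no path of length $k+1$ (so $T$ is not $(k+1)$-edge geodetic).

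First I would set up the basic counting tool. For an edge $f=uv$, deleting $f$ disconnects $T$ into the component $A$ containing $u$ and the component $B$ containing $v$; let $e_A(u)$ and $e_B(v)$ denote the eccentricities of $u$ within $A$ and of $v$ within $B$. Every path through $f$ joins some $x\in A$ to some $y\in B$ and has length $d(x,u)+1+d(v,y)$. Since in a tree the distances from a fixed vertex realise every value from $0$ up to its eccentricity, the lengths of paths through $f$ are precisely the integers $1,2,\dots,L(f)$, where $L(f)=e_A(u)+e_B(v)+1$ is the length of the longest path through $f$. Hence $f$ lies on a path of length exactly $k$ if and only if $k\le L(f)$, and the whole statement reduces to comparing $L(f)$ with $k$.

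Next I would dispatch (ii), the easy half. If $\ell_0\in S_T$ is a pendant vertex attaining $e(\ell_0)=k$, with neighbour $w_0$, then for the pendant edge $f_0=\ell_0 w_0$ the component of $\ell_0$ is $\{\ell_0\}$, so $L(f_0)=0+e_B(w_0)+1=e(\ell_0)=k$. Thus no path through $f_0$ has length exceeding $k$, so $f_0$ lies on no path of length $k+1$, and $T$ is not $(k+1)$-edge geodetic.

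The heart of the argument, and the step I expect to be the \emph{main obstacle}, is (i): I must show $L(f)\ge k$ for every edge $f$, so that $k$ falls in the admissible range $\{1,\dots,L(f)\}$. I would fix $f=uv$ and assume without loss of generality that $e_A(u)\le e_B(v)$. Let $\ell$ be a vertex of $A$ at distance $e_A(u)$ from $u$; since a farthest vertex from $u$ in the tree $A$ is a leaf of $A$ and deleting $f$ alters no degree except those of $u$ and $v$, such $\ell$ is a pendant vertex of $T$. Its farthest vertex lies on the $B$-side: every $z\in B$ satisfies $d(\ell,z)=e_A(u)+1+d(v,z)\le L(f)$, while every $z\in A$ satisfies $d(\ell,z)\le 2e_A(u)<L(f)$ by the triangle inequality together with $e_B(v)\ge e_A(u)$. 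Hence $e(\ell)=L(f)$, and since $\ell\in S_T$ we get $L(f)=e(\ell)\ge \min_{v\in S_T}e(v)=k$. Combined with the counting tool this shows every edge lies on a path of length exactly $k$, so $T$ is $k$-edge geodetic; with (ii) this yields that $T$ is $[k]$-edge geodetic. The one point needing care is the degenerate case where a component is a single vertex (for instance $f$ itself pendant, or $T=K_2$), which I would absorb into the same formula by taking $e_A(u)=0$ and $\ell=u$.
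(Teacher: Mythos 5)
Your proof is correct, but it takes a genuinely different route from the paper's. The paper argues globally from a single extremal leaf: it fixes a pendant vertex $u$ attaining $e(u)=k$, notes (as you do) that the pendant edge at $u$ rules out $(k+1)$-edge geodeticity, asserts that all pendant edges are ``clearly'' covered by $k$-geodesics, and then, for an internal edge $ab$ not covered by any $k$-geodesic starting at $u$, splices together a $k$-geodesic through $ab$ using a nearest leaf $y$ to $b$ and a suitable vertex $u_{k+1-r-q}$ on a $k$-geodesic emanating from $u$. Your argument is local and structural: you delete the edge $f=uv$, observe that in a tree the geodesics through $f$ have lengths exactly $\{1,\dots,L(f)\}$ where $L(f)=e_A(u)+e_B(v)+1$, and then prove $L(f)\ge k$ by showing that $L(f)$ is itself the eccentricity of a pendant vertex of $T$ (a farthest-from-$u$ vertex in the shallower component), so that $L(f)\ge \min_{v\in S_T}e(v)=k$. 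Your approach buys several things: it treats pendant and internal edges uniformly with no case split; it is fully rigorous where the paper's splicing step is terse and leaves the verification that the spliced path has length exactly $k$ and contains $ab$ to the reader; and it yields the stronger fact that each edge lies on geodesics of every length $1,\dots,L(f)$, which recovers, for trees, the paper's earlier remark that $k$-edge geodetic implies $i$-edge geodetic for all $i\le k$. The paper's approach buys brevity and explicit constructions anchored at leaves, which feed directly into the subsequent observation that covering the pendant edges suffices and into the generator bounds $\left\lceil |S_T|/2\right\rceil \le |gen(T)|\le |S_T|-1$.
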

 \begin{proof}
     For $k=\min\limits_{v\in S_T} e(v)$, there exists a pendant vertex $u$ in $T$ such that $\max\limits_{x\in V(T)}d(u,x)=k$. Thus the pendant edge from $u$ is not covered by any $k+1$-geodesic of $T$ and so $T$ is not $k+1$-edge geodetic. Clearly, all the pendant edges of $T$ are covered by some $k$-geodesic of $T$. Suppose $ab\in E(T)$ with $d_a,d_b>1$ and $d(u,a)=r,d(u,b)=r+1<k$. Also, we assume $ab$ is not covered by any $k$-geodesic with initial $u$. Take a $k$-geodesic $P:uu_1u_2\ldots u_k$ in $T$. Let $y\in S_T$ such that $q=\min\limits_{v\in S_T}d(v,b)=d(y,b)$, then there exists a $(u_{k+1-r-q} - y)$ $k$-geodesic which covers $ab$. 
 \end{proof}
 It is clear that in a $[k]$-edge geodetic tree $T$, each $k$-geodesics in $gen(T)$ covers at least one pendant edge of $T$. In other words, to cover $T$ it is sufficient to cover all the pendant edges of $T$. This leads us to conclude the following remark.
 \begin{rem}
     For a tree $T$, $|S_T|-1\geq |gen(T)|\geq \left\lceil \frac{|S_T|}{2}\right\rceil$.
 \end{rem}
 \begin{cor}
     For a tree $T$, $|gen(T)|=|S_T|-1$ if and only if either $|V(T)|\leq 3$ or there exists one and only one pendant vertex $u$ such that $e(u)=\min\limits_{v\in V(T)}e(v)$.
 \end{cor}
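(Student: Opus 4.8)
The plan is to convert the computation of $|gen(T)|$ into a matching problem on the pendant vertices. By the remark preceding the statement, every $k$-geodesic occurring in a generator of $T$ contains at least one pendant edge, a family of $k$-geodesics covers $T$ as soon as it covers all pendant edges, and a single $k$-geodesic can meet at most two pendant edges, namely those at its two endpoints, which are then pendant vertices at distance exactly $k$. I would therefore introduce the auxiliary graph $H$ on the vertex set $S_T$ in which two pendant vertices are adjacent precisely when their distance in $T$ equals $k=\min_{v\in S_T}e(v)$; covering $T$ then reduces to covering the $|S_T|$ pendant edges, one geodesic disposing of two of them exactly when the corresponding leaves form an edge of $H$.

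The first step is the exact formula $|gen(T)|=|S_T|-M$, with $M$ the matching number of $H$. The bound ``$\le$'' comes from the explicit cover by a maximum matching of $H$ together with one private $k$-geodesic for each unmatched leaf (such a geodesic exists because $e(\ell)\ge k$), which by the remark covers all of $T$. For ``$\ge$'', in a minimal cover every geodesic meets a pendant edge, so the geodesics meeting two leaves form a subgraph $F\subseteq H$; since each component of $F$ has at most one more vertex than it has edges, $F$ is incident to at most $|E(F)|+M$ leaves, and a short count then forces $|gen(T)|\ge |S_T|-M$. Hence $|gen(T)|=|S_T|-1$ is equivalent to $M=1$.

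Next I would record that $M\ge 1$ always holds: choosing a pendant vertex $u$ with $e(u)=k$, a vertex realising its eccentricity is again a leaf $w$ with $d(u,w)=k$, so $uw\in E(H)$. The content of the corollary is thus the characterisation of $M=1$, i.e.\ of the case in which $H$ has an edge but no two disjoint edges, and this dichotomy is meant to match the two alternatives of the statement. When $|V(T)|\le 3$ the tree has only two leaves, $H$ is a single edge, and $M=1$ is automatic. Otherwise $M=1$ should force the existence of one distinguished pendant vertex $u$ through which every distance-$k$ pair passes, so that $H$ is a star centred at $u$ (plus isolated leaves); this is the ``one and only one'' pendant vertex of the statement, which I would aim to identify through its eccentricity.

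The main obstacle is precisely this last equivalence for trees with four or more leaves, together with pinning the hub $u$ to the stated eccentricity condition. The delicate configuration to exclude is a ``triangle'' in $H$, three pendant vertices pairwise at distance $k$: a Steiner-point computation shows this forces $k$ even and three branches of length $k/2$ from a common vertex, and one must then check whether any further pendant vertex creates a disjoint pair and so reinstates $M\ge 2$. I expect to treat this by analysing, for two intersecting edges $ua,ub$ of $H$, the Steiner tree of $\{u,a,b\}$ and showing that a fourth leaf at distance $k$ from any of $u,a,b$ produces two disjoint pairs; this isolates the hub $u$ and should close the characterisation, with the eccentricity description of $u$ being the final and most delicate point to secure.
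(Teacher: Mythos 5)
Your matching framework has two sound components --- the lower bound $|gen(T)|\ge |S_T|-M$ (the component count on $F\subseteq H$ is fine) and the observation that $M\ge 1$ --- but the upper bound, and hence the pivotal ``exact formula'' $|gen(T)|=|S_T|-M$, has a genuine gap: it rests on the sufficiency claim you quote from the paper (any family of $k$-geodesics covering all pendant edges covers $T$), and that claim is false. Let $T$ have adjacent vertices $x,y$, with a pendant vertex $u$ and a path $m_1m_2m_3$ attached at $x$, and paths $s_1s_2$ and $t_1t_2$ attached at $y$. The leaves $u,m_3,s_2,t_2$ have eccentricities $4,6,6,6$, so $k=4$. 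The two $4$-geodesics $uxm_1m_2m_3$ and $s_2s_1yt_1t_2$ cover all four pendant edges but miss the edge $xy$; and since $|E(T)|=9>2\cdot 4$, no two $4$-geodesics can cover $T$ at all. Hence $|gen(T)|=3$ (add the geodesic $uxys_1s_2$), whereas $H$ contains the matching $\{um_3,\,s_2t_2\}$, so $|S_T|-M=2$. Your formula fails, and with it your claimed equivalence ``$|gen(T)|=|S_T|-1$ iff $M=1$'': this tree has $|gen(T)|=|S_T|-1$ while $M=2$.

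The step you defer to the end --- identifying $M=1$ (or $|gen(T)|=|S_T|-1$) with the stated eccentricity condition --- cannot be closed by any Steiner-point analysis, because the corollary itself is false. Already $T=P_4$ satisfies $|gen(T)|=1=|S_T|-1$ and $|V(T)|=4>3$, yet $\min_{v\in V(T)}e(v)=2$ is attained only at the two internal vertices, so no pendant vertex attains it; reading the condition instead as uniqueness of the minimum-eccentricity leaf does not help, since the two ends of the path tie. A non-path example: the spider with three legs of length $1$ and one leg of length $3$ has exactly three $4$-geodesics, all of which are needed, so $|gen(T)|=3=|S_T|-1$ and $M=1$ (here $H$ is a star centred at the far leaf), yet the radius is attained only at an internal vertex and all four leaves tie for $\min_{v\in S_T}e(v)$. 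Note that the paper itself offers no proof of this corollary, and none can exist; what your framework (suitably repaired on the upper-bound side) actually yields is a refutation of the statement, not a proof of it.
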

 \begin{thm}
      A connected graph $G$ with $|E(G)|\geq 2$ is $[k]$-edge geodetic ($k\geq 2$) if and only if $|V(G)|>|gen(G)|$.
 \end{thm}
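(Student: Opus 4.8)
The plan is to split the biconditional into its two implications and to exploit the observation that, by the Remark, $G$ is $[k]$-edge geodetic with $k\ge 2$ precisely when $G$ is $2$-edge geodetic; equivalently, the excluded case is $k=1$, i.e.\ $G$ is $[1]$-edge geodetic (note every edge is trivially a $1$-geodesic, so every connected $G$ with $|E(G)|\ge 2$ has $k\ge 1$). Since a $1$-geodesic is a single edge, a set of $1$-geodesics can cover $E(G)$ only if it contains every edge, so for a $[1]$-edge geodetic graph one has $gen(G)=gen_1(G)=E(G)$ and hence $|gen(G)|=|E(G)|$. This identity is the hinge I would use to separate the two cases.

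For the implication $\big(|V(G)|>|gen(G)|\big)\Rightarrow\big(k\ge 2\big)$ I would argue by contraposition: assuming $G$ is $[1]$-edge geodetic, show $|V(G)|\le|gen(G)|=|E(G)|$. First I would rule out $G$ being a tree: if $G$ were a tree with $|E(G)|\ge 2$, then $|V(G)|\ge 3$, and for any edge $uv$ one endpoint, say $u$, has degree $\ge 2$, so there is a neighbour $w\ne v$ of $u$; the path $wuv$ has length $2$ and, being the unique path in a tree, is automatically a geodesic, so every edge lies on a $2$-geodesic and $G$ would be $2$-edge geodetic, contradicting $k=1$. Hence $G$ is not a tree, so $|E(G)|\ge|V(G)|$, giving $|V(G)|\le|E(G)|=|gen(G)|$. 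This half is routine once the identity $|gen(G)|=|E(G)|$ for $k=1$ is secured.

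The substantive direction is $\big(k\ge 2\big)\Rightarrow\big(|V(G)|>|gen(G)|\big)$, and here the plan is to bound a minimum $k$-geodesic generator from above by $|V(G)|-1$. Fix a minimum generator $\{P_1,\dots,P_t\}$; since $k\ge 2$, each $P_i$ has $k+1\ge 3$ vertices and exactly $k\ge 2$ edges, and by minimality each $P_i$ carries a private edge $e_i$ covered by no other $P_j$, so $e_1,\dots,e_t$ are distinct. I would then attempt to charge the $t$ geodesics injectively to vertices of $G$ while leaving at least one vertex uncharged, using the endpoints of the private edges together with the extra internal vertices that every length-$\ge 2$ geodesic contributes.

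The hard part will be exactly this upper bound $|gen(G)|\le|V(G)|-1$, because the private edges on their own force only about $\sqrt{2t}$ vertices and need not form a matching, so the naive charging breaks down; one must feed in the shortest-path structure (the distinct distances $0,1,\dots,k$ along each $P_i$) to control how much the geodesics can overlap. I would stress-test the argument on the complete bipartite graphs, since by the earlier theorem $|gen(K_{m,n})|=\lceil mn/2\rceil$ while $|V(K_{m,n})|=m+n$, and these are the tight instances: already $K_{4,4}$ gives $|gen|=8=|V|$, so the claimed \emph{strict} inequality sits exactly on the boundary there. I would therefore scrutinise, before grinding out the general charging argument, whether the intended conclusion is $|V(G)|\ge|gen(G)|$ or whether such boundary families must be excluded, since this is precisely where the forward implication is most fragile.
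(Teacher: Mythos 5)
Your first implication ($|V(G)|>|gen(G)|$ implies $k\ge 2$) is correct and is essentially the paper's own argument: the paper likewise reduces to the observation that a $[1]$-edge geodetic graph has $gen(G)=E(G)$, deduces that $|V(G)|>|E(G)|$ forces $G$ to be a tree, and then rules trees out (it cites Theorem \ref{thmtree} to get $\min_{v\in S_T}e(v)\ge 2$, whereas you prove the same fact directly by extending any edge of a tree through an endpoint of degree at least $2$ to a $2$-geodesic).

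The direction you declined to complete is the crux, and your ``stress test'' is not a boundary worry but a refutation: the implication ``$k\ge 2$ implies $|V(G)|>|gen(G)|$'' is false, and it is contradicted by the paper's own theorem on complete bipartite graphs. $K_{4,4}$ is connected with $16$ edges and is $[2]$-edge geodetic, yet $|gen(K_{4,4})|=\lceil 16/2\rceil=8=|V(K_{4,4})|$, so the strict inequality fails; worse, $K_{4,5}$ gives $|gen|=10>9=|V|$ and $K_{5,5}$ gives $|gen|=13>10=|V|$, so even the relaxed conclusion $|V(G)|\ge|gen(G)|$ that you float at the end fails, and no weakening of the inequality rescues the statement. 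Consequently the charging argument you anticipated having to invent (each geodesic forced to contribute a fresh vertex) cannot exist, and this is exactly where the paper's own proof breaks down: it verifies that $P_1$, $P_1\cup P_2$ and $P_1\cup P_2\cup P_3$ contain more than $1$, $2$, $3$ vertices respectively and then asserts ``continuing this process,'' but the inductive step is unjustified, since a later geodesic may contribute no new vertex at all without coinciding with an earlier path and without being the single non-geodesic concatenation the paper excludes. Concretely, in $K_{4,4}$ with the paper's own generator $\{v_1u_iv_2 : i\le 4\}\cup\{v_3u_iv_4 : i\le 4\}$, the last three $2$-geodesics add no new vertices, which is precisely how equality $|V|=|gen|$ is attained. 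So your half-proof is sound, and your skepticism about the other half is the mathematically correct conclusion: that direction is not provable, and the theorem as stated is inconsistent with the paper's Theorem on $K_{n,m}$.
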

 \begin{proof}
     Let $G$ is $[k]$-edge geodetic graph where $k\geq 2$ and $gen(G)=\{P_1,P_2,\ldots,P_r\}$. Further, suppose
     \[
     P_i: x_{i1}x_{i2}\ldots x_{ik} \text{ for all } 1\leq i \leq r. 
     \]
     Then for all $1\leq i\leq r$, $x_{i1}$ and $x_{ik}$ are two distinct vertices of $G$. Also, there exists at least one $x_{2i}$ in $P_2$ distinct from vertices of $P_1$ otherwise either $P_1\equiv P_2$ or $P_2$ is not a $k$-geodesic. We claim $P_2$ and $P_3$ contains at least two vertices distinct from vertices of $P_1$. If $P_2$ contains more than one vertex distinct from vertices of $P_1$ then the claim follows. If $P_2$ contains exactly one vertex say $x_{2t}$ distinct from vertices of $P_1$ then it follows that $P_3$ contains at least one vertex distinct from all the vertices of $P_1$ and $P_2$, otherwise either $P_3\equiv P_1$ or $P_3\equiv P_2$ or $$P_3:x_{11}x_{12}\ldots x_{1t-1}x_{1t}x_{2t}x_{1t+1}\ldots x_{1k}$$ which is not a $k$-geodesic, a contradiction. Thus, $|V(P_1)\cup V(P_2)\cup V(P_3)|>3$. Continuing this process we will obtain
     \[
     |V(G)|>|gen(G)|.
     \]
     Conversely, suppose $|V(G)|>|gen(G)|$. If $G$ is $[1]$-edge geodetic then $gen(G)=E(G)$. So, $|V(G)|>|E(G)|$, i.e, $G$ is a tree. By Theorem \ref{thmtree}, $1=\min\limits_{v\in S_G}e(v)$ which is not possible as $G$ is connected and $|E(G)|\geq 2$.
 \end{proof}
 \begin{cor}
     For a graph $G$, $\Delta(G)\leq 2|gen(G)|$ where $\Delta(G)=\max\limits_{v\in V(G)}d(v)$ is the maximum degree of a vertex $v$. 
 \end{cor}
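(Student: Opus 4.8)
The plan is to exploit a counting argument on the edges incident to a vertex of maximum degree. Let $v \in V(G)$ be a vertex with $d(v) = \Delta(G)$, and write $gen(G) = \{P_1, P_2, \ldots, P_r\}$ for the generating set of $k$-geodesics, so that $r = |gen(G)|$. Since $G$ is $[k]$-edge geodetic, the geodesics $P_1, \ldots, P_r$ together cover every edge of $G$; in particular they cover each of the $\Delta(G)$ edges incident to $v$.

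The key observation I would isolate is that a single geodesic $P_i$ can cover at most two of the edges incident to $v$. Indeed, a $k$-geodesic is a shortest path and hence a \emph{simple} path, so it meets the vertex $v$ at most once. If $v$ is an interior vertex of $P_i$, then exactly two edges of $P_i$ are incident to $v$; if $v$ is an endpoint of $P_i$, then exactly one edge of $P_i$ is incident to $v$; and if $v$ does not lie on $P_i$ at all, then none are. In every case at most two of the edges at $v$ belong to $P_i$.

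Combining the two facts, the $\Delta(G)$ edges incident to $v$ are distributed among the $r$ geodesics with each geodesic accounting for at most two of them, so $\Delta(G) \leq 2r = 2|gen(G)|$, which is precisely the claimed inequality. Phrased the other way around, covering all edges at $v$ forces at least $\lceil \Delta(G)/2 \rceil$ geodesics into $gen(G)$, whence $|gen(G)| \geq \lceil \Delta(G)/2 \rceil \geq \Delta(G)/2$.

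I do not anticipate any genuine obstacle: the entire content is the simple-path observation, which is what yields the factor of two. The only point requiring mild care is the boundary case in which $v$ happens to be an endpoint of some geodesic and thus contributes only one edge there, but this can only strengthen the count and does not affect the stated bound.
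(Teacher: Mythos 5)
Your proof is correct. The paper states this corollary without any proof, and your counting argument --- each geodesic in $gen(G)$ is a shortest path, hence simple, hence contains at most two edges incident to any fixed vertex, so covering the $\Delta(G)$ edges at a maximum-degree vertex forces $|gen(G)| \geq \lceil \Delta(G)/2 \rceil$ --- is exactly the natural argument the authors leave implicit.
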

  Let \( G = (V(G), E(G)) \) and \( H = (V(H), E(H)) \) be two graphs. The vertex set of the Cartesian product \( G \square H \) is $V(G \square H) = V(G) \times V(H)=\{(g,h);g\in V(G), h\in V(H)\}$ and 
the edge set \( E(G \square H) \) is given by
\[
E(G \square H) = \left\{(g_1, h_1)(g_2, h_2) \mid g_1 = g_2 \text{ and } h_1h_2 \in E(H) \text{ or }  h_1 = h_2 \text{ and } g_1g_2 \in E(G)\right\}.
\]
The vertex set of the strong product $G\boxtimes H$ of $G$ and $H$ is same as the vertex set of $G\square H$ but the edge set is given by
\begin{equation*}
    \begin{split}
        E(G \boxtimes H) = &\{(g_1, h_1)(g_2, h_2) \mid g_1 = g_2 \text{ and } h_1h_2 \in E(H) \text{ or }  h_1 = h_2 \text{ and } g_1g_2 \in E(G)\\ &~~~~~~~~~~~~~~~~~~~~~~~~~~~~~~~~~~~~~~~~ \text{ or } g_1g_2 \in E(G) \text{ and } h_1h_2 \in E(H)\}.
    \end{split}
\end{equation*}
 \begin{thm}\label{thmcar}
     Let $G$ be $[p]$-edge geodetic graph and $H$ be $[q]$-edge geodetic graph. Then $G\square H$ is $[p+q]$-edge geodetic graph.\\
     Also, the number of generator of $G\square H\leq |gen(G)|\cdot |V(H)|+|gen(H)|\cdot |V(G)|$.
 \end{thm}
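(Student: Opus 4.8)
The plan is to exploit the distance formula for the Cartesian product,
\[
d_{G\square H}\big((g_1,h_1),(g_2,h_2)\big)=d_G(g_1,g_2)+d_H(h_1,h_2),
\]
together with the resulting ``shuffle'' structure of geodesics: a geodesic $R$ of $G\square H$ from $(s,t)$ to $(s',t')$ has exactly $d_G(s,s')$ coordinate-moves in $G$ and $d_H(t,t')$ moves in $H$, so its projections $\pi_G(R)$ and $\pi_H(R)$ are geodesics of $G$ and $H$, and $R$ is an interleaving of the two. For the constructive direction, given a $p$-geodesic $P=x_0x_1\cdots x_p$ of $G$ and a $q$-geodesic $Q=y_0y_1\cdots y_q$ of $H$ with $h=y_j$, the path obtained by running $Q$ up to $y_j$, then all of $P$ in the layer $H=h$, then the rest of $Q$, has length $p+q$, is again a geodesic, and contains every horizontal edge $(x_i,h)(x_{i+1},h)$.

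First I would show $G\square H$ is $(p+q)$-edge geodetic. Every edge is horizontal, $(g_1,h)(g_2,h)$ with $g_1g_2\in E(G)$, or vertical, $(g,h_1)(g,h_2)$ with $h_1h_2\in E(H)$. For a horizontal edge, $[p]$-edge geodeticity of $G$ supplies a $p$-geodesic through $g_1g_2$, while $q$-edge geodeticity (and connectedness) of $H$ puts $h$ on some $q$-geodesic; the shuffle above then covers the edge, and vertical edges are symmetric. This simultaneously yields the generator bound: lifting each member of $gen(G)$ into each of the $|V(H)|$ layers covers all horizontal edges since $\bigcup_{P\in gen(G)}E(P)=E(G)$, and lifting each member of $gen(H)$ into each of the $|V(G)|$ columns covers all vertical edges, producing $|gen(G)|\cdot|V(H)|+|gen(H)|\cdot|V(G)|$ geodesics of length $p+q$; minimality of $gen(G\square H)$ gives the claimed inequality.

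The main obstacle is the other half, that $G\square H$ is \emph{not} $(p+q+1)$-edge geodetic. A useful reduction is single-edge monotonicity: if $e$ lies on a geodesic of length $\ell$, then by taking subpaths through $e$ it lies on one of every length $1\le i\le\ell$; so each edge $e$ has a well-defined maximal geodesic length $\lambda(e)$, and ``$[k]$-edge geodetic'' means exactly $\min_e\lambda(e)=k$. I must therefore exhibit an edge $E$ with $\lambda_{G\square H}(E)=p+q$. The natural candidate is a horizontal edge $(a,h_0)(b,h_0)$ with $ab$ a ``bad'' edge of $G$, $\lambda_G(ab)=p$. Projecting any geodesic through it to $G$ forces $|\pi_G(R)|\le p$, while $\pi_H(R)$ is a geodesic of $H$ through $h_0$, so $\lambda_{G\square H}((a,h_0)(b,h_0))=p+M_H(h_0)$, where $M_H(h_0)=\max_{e\ni h_0}\lambda_H(e)$ is the longest geodesic of $H$ through $h_0$.

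The crux is hence to find $h_0$ with $M_H(h_0)=q$ (or symmetrically $g_0$ with $M_G(g_0)=p$). Since $M_H(h_0)\ge q$ always, what is required is a vertex all of whose incident edges attain the global minimum $\lambda_H=q$. I expect this existence question to be the hard and delicate step. It is immediate when a factor is a tree: take $h_0$ a pendant vertex of minimum eccentricity, whose unique edge has $\lambda_H=q$ by Theorem~\ref{thmtree}. For a general $[q]$-edge geodetic graph, however, a minimum-$\lambda$ edge may have both endpoints incident to longer geodesics, in which case the complementary factor must be made to supply the needed vertex; so I would try to isolate a clean sufficient condition on $G$ or $H$ guaranteeing such a vertex and reduce the statement to that condition, flagging this as the point where an extra hypothesis may be unavoidable.
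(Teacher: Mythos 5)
Your constructive half and your generator bound are exactly the paper's argument: the paper likewise shuffles a $q$-geodesic of $H$ through the edge $v_jv_k$ with a $p$-geodesic of $G$ through the vertex $u_i$, and obtains the bound by lifting $gen(G)$ into every $H$-layer and $gen(H)$ into every $G$-column. Where you and the paper part ways is the negative half. The paper takes an edge $uv$ of $G$ on no $(p+1)$-geodesic, an arbitrary $x\in V(H)$, and supposes $(u,x)(v,x)$ lies on a $(p+q+1)$-geodesic $P$; it correctly rules out the case of at least $p+2$ distinct $G$-coordinates (that would project to a $(p+1)$-geodesic of $G$ through $uv$), but then simply asserts that with fewer distinct $G$-coordinates ``$P$ cannot be of length $p+q+1$.'' That assertion is precisely the step you isolated as the crux: it presupposes that the $H$-projection of $P$ --- a geodesic of $H$ through $x$ --- has length at most $q$, i.e.\ that $M_H(x)=q$ in your notation. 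But $[q]$-edge geodeticity supplies an \emph{edge} of $H$ on no $(q+1)$-geodesic, not a \emph{vertex} on no $(q+1)$-geodesic, so the paper's proof has a genuine gap, and it is exactly the one you flagged.

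Moreover, your suspicion that an extra hypothesis is unavoidable is correct: the theorem as stated is false. Let $G=H$ be the graph built from two triangles $x_1y_1z_1$ and $x_2y_2z_2$ joined by the edge $z_1z_2$, with pendant vertices $u_i$ adjacent to $x_i$ and $v_i$ adjacent to $y_i$ ($i=1,2$). This graph is $[3]$-edge geodetic: the edge $x_1y_1$ lies on the $3$-geodesic $u_1x_1y_1v_1$ and on no longer geodesic (the vertices $u_1,v_1$ are pendant, and $d(u_1,z_1)=d(z_1,v_1)=2$ kills the extensions $u_1x_1y_1z_1$ and $z_1x_1y_1v_1$), while every other edge lies on a $5$-geodesic such as $u_1x_1z_1z_2x_2u_2$ or $v_1y_1z_1z_2x_2u_2$. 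Yet every \emph{vertex} lies on a $4$-geodesic: $v_1,y_1$ lie on $v_1y_1z_1z_2x_2$, the remaining vertices lie on length-$4$ subpaths of $u_1x_1z_1z_2x_2u_2$, and symmetrically on the other side. So $M_G(g)\ge 4$ and $M_H(h)\ge 4$ for every vertex, and your formula gives $\lambda_{G\square H}(E)\ge 3+4=7$ for \emph{every} edge $E$ of $G\square H$: concretely, shuffling a longest geodesic of $G$ through $ab$ with a longest geodesic of $H$ through $h$ yields a geodesic of length at least $7$ through any edge $(a,h)(b,h)$, and symmetrically for the other edge type. Hence $G\square H$ is $7$-edge geodetic and therefore not $[p+q]=[6]$-edge geodetic. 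The statement your analysis actually proves is that $G\square H$ is $[\min\{p+\mu_H,\,q+\mu_G\}]$-edge geodetic, where $\mu_G=\min_{g\in V(G)}M_G(g)$ and $\mu_H=\min_{h\in V(H)}M_H(h)$; the paper's claim is valid exactly when $\mu_G=p$ or $\mu_H=q$, for instance when one factor is a tree, as you observed via Theorem~\ref{thmtree}.
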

 \begin{proof}
     For an edge of the form $(u_i,v_j)(u_i,v_k)$ in $G\square H$, there exists a $q$-geodesic $P_1:x_1x_2\ldots x_tx_{t+1}\ldots x_q$ in $H$ where $x_tx_{t+1}=v_jv_k$ which covers $v_jv_k$ and a $p$-geodesic $P_2:u_1u_2\ldots u_i\ldots u_p$ which covers vertex $u_i$. Then, $$(u_1,x_1)\ldots (u_1,x_t)(u_2,x_t)\ldots (u_i,x_t)(u_i,x_{t+1})\ldots (u_i,x_q)(u_{i+1},x_q)\ldots (u_p,x_q)$$ is a $p+q$-geodesic in $G\square H$ which covers $(u_i,v_j)(u_i,v_k)$. Similarly, for an edge $(u_l,v_j)(u_r,v_j)$ in $G\square H$ there exists a $p+q$-geodesic in $G\square H$ which covers $(u_l,v_j)(u_r,v_j)$. As $G$ is not $p+1$-edge geodetic, there exists an edge $uv$ in $G$ which is not covered by any $p+1$-geodesic of $G$. Suppose an edge $(u,x)(v,x)$ in $G\square H$ is covered by $p+q+1$-geodesic $P:(u_{i_1},v_{j_1})(u_{i_2},v_{j_2})\ldots(u_{i_t},v_{j_t})(u_{i_{t+1}},v_{j_{t+1}})\ldots(u_{i_{p+q+2}},v_{j_{p+q+2}})$ where $u_{i_t}=u, v_{j_t}=v_{j_{t+1}}$ and $u_{i_{t+1}}=v$. Clearly, $u_{i_1}, u_{i_2}, \ldots, u_{i_t}, u_{i_{t+1}},\ldots, u_{i_{p+r+2}}$ are not distinct. If there are $k(\geq p+2)$ distinct $u_i$ then there exists a $p+1$-geodesic which covers $uv$ in $G$, a contradiction. If there are $k(<p+2)$ distinct $u_i$ then $P$ cannot be of length $p+q+1$. Hence, $G\square H$ is $[p+q]$-edge geodetic.\\
     Also, $|gen(G)|\cdot |V(H)|$ number of $p+q$-geodesic covers all the edges of the form $(u_i,v_j)(u_l,v_j)$ and $|gen(H)|\cdot |V(G)|$ number of $p+q$-geodesic covers all the edges of the form $(u_i,v_j)(u_i,v_k)$. This completes the proof.
 \end{proof}
 \begin{thm}
     Let $G$ be $[p]$-edge geodetic graph and $H$ be $[q]$-edge geodetic graph. Then $G\boxtimes H$ is $[k]$-edge geodetic graph where $k=min\{p,q\}$.\\
     Further, if $p\leq q$ then, $|gen(G\boxtimes H)|\leq |V(H)|\cdot |gen(G)|+|V(G)|\cdot|E(H)|+2|gen(G)|\cdot|E(H)|$.
 \end{thm}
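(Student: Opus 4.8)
The plan rests on the standard metric identity for the strong product,
\[
d_{G\boxtimes H}\bigl((g_1,h_1),(g_2,h_2)\bigr)=\max\{d_G(g_1,g_2),\,d_H(h_1,h_2)\},
\]
which I would record first: it holds because diagonal edges let one advance in both coordinates simultaneously, so a shortest walk needs exactly $\max$ many steps, while each step alters each coordinate–distance by at most $1$. By commutativity of $\boxtimes$ I may assume $p\le q$, so $k=p$; and since $H$ is $[q]$-edge geodetic, the Remark makes it $p$-edge geodetic as well. The statement then splits into three tasks: (i) every edge lies on a $p$-geodesic, (ii) some edge lies on no $(p+1)$-geodesic, and (iii) the counting bound.

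For (i) I would treat the three edge types separately. A horizontal edge $(g_1,h)(g_2,h)$ is the level-$h$ lift of a $p$-geodesic of $G$ through $g_1g_2$ (which exists since $G$ is $[p]$-edge geodetic): every step stays horizontal and the identity gives endpoint–distance $\max\{p,0\}=p$. A vertical edge $(g,h_1)(g,h_2)$ is handled symmetrically via a $p$-geodesic of $H$ through $h_1h_2$. For a diagonal edge $(g_1,h_1)(g_2,h_2)$ I would take a $p$-geodesic $x_0x_1\cdots x_p$ of $G$ with $x_sx_{s+1}=g_1g_2$ and form
\[
(x_0,h_1)\cdots(x_s,h_1)(x_{s+1},h_2)\cdots(x_p,h_2),
\]
which traverses the edge at the single diagonal step $s$ and has endpoint–distance $\max\{p,1\}=p$. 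Hence every edge is covered by a $p$-geodesic.

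For (ii), since $G$ is not $(p+1)$-edge geodetic, fix an edge $g_1g_2\in E(G)$ lying on no $(p+1)$-geodesic of $G$ and consider $e=(g_1,h)(g_2,h)$. Suppose $e$ lay on a $(p+1)$-geodesic $W$ from $(a,b)$ to $(c,d)$, and write $\alpha=d_G(a,c)$, $\beta=d_H(b,d)$, so $\max\{\alpha,\beta\}=p+1$. If $\alpha=p+1$, then the $G$-projection of $W$ is a walk of length $p+1$ realizing distance $p+1$, hence a $(p+1)$-geodesic of $G$ containing $g_1g_2$, contradicting the choice of $g_1g_2$. If instead $\alpha\le p$, then $\beta=p+1$, so the $H$-projection must be a length-$(p+1)$ walk realizing $H$-distance $p+1$, forcing all $p+1$ of its steps to move in $H$; but $e$ is horizontal, so at the step where $W$ traverses $e$ the $H$-coordinate stays at $h$, an $H$-stationary step, a contradiction. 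Thus $e$ is on no $(p+1)$-geodesic, and $G\boxtimes H$ is $[p]$-edge geodetic. I expect this second case to be the main obstacle: when $q>p$ the factor $H$ has geodesics longer than $p$, so a $(p+1)$-geodesic of the product may be driven entirely by the $H$-coordinate and no contradiction from $G$ alone is available; the decisive point is that a horizontal edge injects an $H$-stationary step, which is incompatible with an $H$-projection that must realize distance $p+1$ in exactly $p+1$ steps.

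For the bound (iii) I would cover the three families explicitly. Lifting each of the $|gen(G)|$ generators of $G$ to each of the $|V(H)|$ levels covers all horizontal edges, giving $|V(H)|\cdot|gen(G)|$ geodesics. Covering each vertical edge by one vertical lift of an $H$ $p$-geodesic through it contributes $|V(G)|\cdot|E(H)|$. For the diagonal edges I would use, for each generator $x_0\cdots x_p\in gen(G)$ and each edge $h_1h_2\in E(H)$, the two oscillating staircases $(x_0,h_1)(x_1,h_2)(x_2,h_1)\cdots$ and $(x_0,h_2)(x_1,h_1)(x_2,h_2)\cdots$; each is a $p$-geodesic (its $G$-projection has distance $p$, its $H$-projection distance at most $1$), and jointly they cover both diagonal edges over every edge of the generator, hence all diagonal edges since $gen(G)$ covers $E(G)$. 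This adds $2\,|gen(G)|\cdot|E(H)|$, and summing the three contributions yields the stated bound.
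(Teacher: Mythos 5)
Your proof is correct, and while it follows the same skeleton as the paper (classify edges of $G\boxtimes H$ as horizontal, vertical, or diagonal; cover each class by lifted $p$-geodesics; exhibit one horizontal edge on no $(p+1)$-geodesic; count the three families for the bound), the two delicate steps are handled by genuinely different and sounder arguments. For diagonal edges the paper takes a $p$-geodesic $x_1\ldots x_p$ of $G$ through $u_iu_l$ and a $p$-geodesic $y_1\ldots y_p$ of $H$ through $v_jv_t$ and claims the diagonal path $(x_1,y_1)(x_2,y_2)\ldots(x_p,y_p)$ covers $(u_i,v_j)(u_l,v_t)$; this is only true if the edge $u_iu_l$ and the edge $v_jv_t$ occupy the \emph{same position} $s$ in their respective geodesics, an alignment requirement the paper never addresses. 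Your construction --- lift the $G$-geodesic at level $h_1$ and switch to level $h_2$ exactly at the step carrying $g_1g_2$ --- needs no such alignment and closes this gap; your oscillating staircases in the counting step likewise make explicit which $2|gen(G)|\cdot|E(H)|$ geodesics the paper merely asserts to exist. For maximality, the paper says only ``following a similar argument as in the Cartesian product theorem,'' which is not automatic since the two products have different metrics (sum versus maximum); your projection argument, built on the stated identity $d_{G\boxtimes H}=\max\{d_G,d_H\}$, is the correct strong-product analogue and is self-contained: either the $G$-projection of a putative $(p+1)$-geodesic is forced to be a $(p+1)$-geodesic of $G$ through $g_1g_2$, or the $H$-projection must move at every step, which the horizontal edge forbids. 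In short, the paper's version is terser but leans on an unproved alignment and an unproved analogy; yours costs one explicit metric lemma and buys a complete proof.
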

 \begin{proof}
     Suppose $p\leq q$. For an edge of the form $(u_i,v_j)(u_i,v_l)$ in $G\boxtimes H$ there exists a $p$-geodesic $x_1x_2\ldots x_p$ in $H$ that covers $v_jv_l$. Thus, $(u_i,x_1)(u_i,x_2)\ldots (u_i,x_p)$ is a $p$-geodesic in $G\boxtimes H$ which covers $(u_i,v_j)(u_i,v_l)$. Similarly, for an edge of the form $(u_i,v_j)(u_l,v_j)$ in $G\boxtimes H$ there exists a $p$-geodesic $y_1y_2\ldots y_p$ in $G$ that covers $u_iu_l$. Thus, $(y_1,v_j)(y_2,v_j)\ldots (y_p,v_j)$ is a $p$-geodesic in $G\boxtimes H$ which covers $(u_i,v_j)(u_l,v_j)$. Also, for an edge of the form $(u_i,v_j)(u_l,v_t)$ in $G\boxtimes H$ there exists $p$-geodesics $x_1x_2\ldots x_p$ in $G$ and $y_1y_2\ldots y_p$ in $H$ which covers $u_iu_l$ and $v_jv_t$ respectively. Thus, $(x_1,y_1)(x_2,y_2)\ldots (x_p,y_p)$ is a $p$-geodesic in $G\boxtimes H$ which covers $(u_i,v_j)(u_l,v_t)$. There exists an edge $uv$ in $G$ which is not covered by any $p+1$-geodesic in $G$. Following similar argument as in Theorem \ref{thmcar}, all the edges of the form $(u,x)(v,x)$ is not covered by any $p+1$-path in $G\boxtimes H$. Thus, $G\boxtimes H$ is $[p]$-edge geodetic graph. Similarly we can prove the result for $q<p$.
     Second part follows from the fact that $|gen(G)|\cdot |V(H)|$ number of $p$-geodesic covers all the edges of the form $(u_i,v_j)(u_l,v_j)$, $|E(H)|\cdot |V(G)|$ number of $p$-geodesic covers all the edges of the form $(u_i,v_j)(u_i,v_k)$ and $2|gen(G)|\cdot |E(H)|$ number of $p$-geodesic covers all the edges of the form $(u_i,v_j)(u_l,v_t)$. This completes the proof. 
 \end{proof}
 Let \( G = (V(G), E(G)) \) and \( H = (V(H), E(H)) \) be two graphs, where \( V(G) = \{u_1, u_2, \ldots, u_n\} \) and \( V(H) = \{v_1, v_2, \ldots, v_m\} \). The corona product \( G \circ H \) of $G$ and $H$ is obtained by taking one copy of $G$ and $n$ copies of $H$ and then joining all the vertices in the $i^{th}$ copies of $H$ to the $i^{th}$ vertex of $G$. We denote $i^{th}$ copy of $H$ by $H^i$ and $V(H^i)=\{v_1^i,v_2^i,\ldots,v_m^i\}$. The vertex set \( V(G \circ H) \) is given by
\[
V(G \circ H) = \{u_1,u_2,\ldots,u_n\} \bigcup \left\{\bigcup_{i=1}^{n} V(H^i)\right\}.
\]
The edge set \( E(G \circ H) \) is given by
\[
E(G \circ H) = E(G)\cup\left\{\bigcup\limits_{i=1}^{n} E(H^i)\right\} \cup \left\{\bigcup_{i=1}^{n}\bigcup\limits_{j=1}^{m} v_j^i u \right\}.
\]
 \begin{thm}
     Let $G$ be a graph without isolated vertex and $H$ be a graph with $|E(H)|\geq 2$ then $G\circ H$ is $[2]$-edge geodetic if and only if $H$ is $2$-edge geodetic.
 \end{thm}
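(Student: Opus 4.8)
The plan is to partition $E(G\circ H)$ into three classes and treat each separately: the base edges lying in $E(G)$, the \emph{intra-copy} edges lying in some $E(H^i)$, and the \emph{spoke} edges $u_iv_j^i$ joining $u_i$ to its attached copy. The backbone of every argument is one distance observation. For each $i$ set $S_i:=\{u_i\}\cup V(H^i)$. Since $u_i$ is adjacent to all of $V(H^i)$, the subgraph induced on $S_i$ has diameter at most $2$, and $u_i$ is the only vertex of $S_i$ with neighbours outside $S_i$. Hence distances between two vertices of $S_i$ are realised inside $S_i$ and never exceed $2$, and any path leaving $S_i$ must pass through $u_i$.

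First I would dispose of the spoke and base edges, showing they are covered by $2$-geodesics regardless of $H$. Because $G$ has no isolated vertex, each $u_i$ has a neighbour $u_l\in V(G)$, and then $d(u_l,v_j^i)=1+d_G(u_l,u_i)=2$, so $u_l-u_i-v_j^i$ is a $2$-geodesic covering the spoke $u_iv_j^i$. Likewise, for a base edge $u_lu_r\in E(G)$ and any $v_j^l\in V(H^l)$ (which exists since $H$ is nonempty), $d(v_j^l,u_r)=2$, so $v_j^l-u_l-u_r$ is a $2$-geodesic covering it. Thus the whole question reduces to the intra-copy edges.

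Next I would characterise precisely when an intra-copy edge $v_j^iv_l^i$ (so $v_jv_l\in E(H)$) lies on a $2$-geodesic of $G\circ H$. Every neighbour of $v_j^i$ and of $v_l^i$ lies in $S_i$, and $u_i$ is at distance $1$ from both endpoints, so $u_i$ cannot be the far endpoint of such a $2$-geodesic. The only candidates are $v_p^i-v_j^i-v_l^i$ with $v_pv_j\in E(H)$ and $d(v_p^i,v_l^i)=2$, or the mirror image $v_j^i-v_l^i-v_q^i$; and $d(v_p^i,v_l^i)=2$ holds exactly when $v_pv_l\notin E(H)$. Therefore $v_j^iv_l^i$ is covered by a $2$-geodesic in $G\circ H$ if and only if some vertex is adjacent to $v_j$ but not $v_l$ (or symmetrically), i.e. if and only if $v_jv_l$ lies on a $2$-geodesic in $H$. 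Ranging over all copies, $G\circ H$ is $2$-edge geodetic if and only if $H$ is $2$-edge geodetic, which already yields both implications at the level of $2$-edge geodeticity.

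Finally I would upgrade ``$2$-edge geodetic'' to ``$[2]$-edge geodetic'' by showing $G\circ H$ is never $3$-edge geodetic. The key claim is that no intra-copy edge $v_j^iv_l^i$ can lie on a $3$-geodesic $x_0x_1x_2x_3$: both endpoints of the edge lie in $V(H^i)$, which forces each of their path-neighbours into $S_i$, and the only way for a path vertex to escape $S_i$ is through $u_i$. Checking the (few) possible positions of $v_j^iv_l^i$ along the path shows the two endpoints $x_0,x_3$ always end at distance at most $2$ inside $S_i$ or via $u_i$ (for instance $v_j^i-v_l^i-u_i-u_l$ gives $d(v_j^i,u_l)=2$), contradicting $d(x_0,x_3)=3$. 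Since $|E(H)|\ge 2$ guarantees at least one intra-copy edge, $G\circ H$ is not $3$-edge geodetic, so ``$2$-edge geodetic'' and ``$[2]$-edge geodetic'' coincide here; combined with the previous paragraph this gives the stated equivalence. I expect the main obstacle to be this last step: the elementary but exhaustive case analysis ruling out $3$-geodesics through intra-copy edges, where the care lies in excluding paths that momentarily leave the copy through $u_i$.
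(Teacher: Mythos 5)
Your proof is correct and takes essentially the same route as the paper's: cover base edges and spoke edges by $2$-geodesics through neighbouring vertices, reduce everything to the intra-copy edges, show those are covered in $G\circ H$ exactly when the corresponding edge of $H$ lies on a $2$-geodesic, and rule out $3$-geodeticity using the fact that neighbours of copy vertices stay inside $S_i=\{u_i\}\cup V(H^i)$. If anything, your explicit case analysis excluding $3$-geodesics through intra-copy edges (including paths that escape via $u_i$) is more complete than the paper's one-line justification, which only remarks that distances between vertices of the same copy are at most $2$.
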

 \begin{proof}
     Let $G\circ H$ is $[2]$-edge geodetic. If $H$ is not $2$-edge geodetic then there exists an edge $v_iv_k$ in $H$ which is not covered by any $2$-geodesic of $H$ and so edges $v_i^jv_k^j;j=1,2,\ldots,|v(G)|$ is not covered by any $2$-geodesic of $G\circ H$, a contradiction.\\
     Conversely, let $H$ is $2$-edge geodetic. Then,\\
     $(i)$ any arbitrary edge $u_ku_l$ in $G\circ H$ is covered by $2$-geodesic $u_ku_lv_1^l$ in $G\circ H$.\\
     $(ii)$ any arbitrary edge $v_i^ju_j$ in $G\circ H$ is covered by $2$-geodesic $v_i^ju_ju_l$ in $G\circ H$ for some $u_l\in N_G(u_j)$.\\
     $(iii)$ any arbitrary edge $v_i^jv_k^j$ in $G\circ H$ is covered by $2$-geodesic $v_i^jv_k^jv_l^j$ in $G\circ H$ where $v_iv_kv_l$ is a $2$-geodesic which covers $v_iv_k$ in $H$.\\
     Also, all the shortest paths between two vertices $v_i^j$ and $v_k^j$ in $G\circ H$ is less or equal to $2$. So, $G\circ H$ can not be covered by $3$-geodesics. 
 \end{proof}
 \begin{cor} For graphs $G$ and $H$ with $|V(G)|=n,|V(H)|=m$,\\
     (i) if $|E(G)|< nm$ then,
        \begin{equation*}
            \begin{split}
                \left\lceil |gen_2(H)|\cdot n+\frac{1}{2}|E(G)|+\frac{1}{2}nm\right\rceil \leq |gen(G\circ H)| \leq \left\lceil |gen_2(H)|\cdot n+|E(G)|+\frac{1}{2}nm\right\rceil
            \end{split}
        \end{equation*}
        (ii) If $|E(G)|\geq nm$ then,
        $\left\lceil |gen_2(H)|\cdot n+\frac{1}{2}|E(G)|\right\rceil\leq |gen(G\circ H)|\leq |gen_2(H)|\cdot n+|E(G)|$.
 \end{cor}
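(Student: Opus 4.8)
The plan is to partition $E(G\circ H)$ into three kinds of edges and to show that $2$-geodesics cover them almost independently. Call $u_ku_l$ a \emph{base} edge, $v_j^iv_k^i$ a \emph{fibre} edge, and $v_j^iu_i$ a \emph{spoke}; their numbers are $|E(G)|$, $n|E(H)|$ and $nm$. First I would record the two distance facts that drive everything: since $u_i$ is adjacent to every vertex of $H^i$, we have $d(v_a^i,u_i)=1$ and $d(v_a^i,v_b^i)=\min\{d_H(v_a,v_b),2\}$. From these a short case check shows that a $2$-geodesic through a fibre edge must be $v_a^iv_b^iv_c^i$ with $v_av_c\notin E(H)$ — i.e. exactly a $2$-geodesic of $H$ lifted into one copy — while a $2$-geodesic through a base or a spoke edge uses only base and spoke edges (the possibilities being $u_ku_lu_m$ with $u_ku_m\notin E(G)$, $v_a^iu_iv_b^i$ with $v_av_b\notin E(H)$, and $v_a^iu_iu_l$ with $u_iu_l\in E(G)$). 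In particular no $2$-geodesic mixes fibre edges with the other two kinds.

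This dichotomy immediately pins down the fibre contribution: the fibre edges of $H^i$ can be covered only from inside $H^i$, the $2$-geodesics available there coincide with those of $H$, and distinct copies share no such geodesic, so a minimum cover uses exactly $n\,|gen_2(H)|$ fibre-geodesics. For the lower bound I would then note that these fibre-geodesics are disjoint from every geodesic used on the $|E(G)|+nm$ base and spoke edges, and that each of the latter covers at most two base/spoke edges; hence at least $\lceil\tfrac12(|E(G)|+nm)\rceil$ of them are needed. Summing gives $|gen(G\circ H)|\ge n\,|gen_2(H)|+\lceil\tfrac12(|E(G)|+nm)\rceil$, which is the lower bound of (i) and is $\ge$ the (looser) lower bound of (ii).

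For the upper bound I would build an explicit cover: (a) lift a minimum $2$-geodesic cover of $H$ into each $H^i$, costing $n\,|gen_2(H)|$; (b) at each hub $u_i$ pair the $m$ spokes through $u_i$ by means of non-adjacent pairs of $H$-vertices, i.e. a large matching of $\overline{H}$; (c) absorb the remaining spokes and all base edges using the geodesics $v_a^iu_iu_l$ (spoke $+$ base, always available because $G$ has no isolated vertex) and $u_ku_lu_m$ (base $+$ base). When $|E(G)|\ge nm$ (case (ii)) there are enough base edges to pair off every spoke and still finish the base edges within $|E(G)|$ geodesics, giving $n\,|gen_2(H)|+|E(G)|$; when $|E(G)|<nm$ (case (i)) the spokes are covered mainly in pairs, costing $\lceil nm/2\rceil$, while the base edges take $|E(G)|$ further geodesics, giving $n\,|gen_2(H)|+|E(G)|+\lceil nm/2\rceil$. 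The running tallies then have to be matched to the two stated expressions.

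The hard part will be the feasibility and parity bookkeeping of the pairings in (b)--(c), not any single inequality. Pairing two spokes at a hub needs the corresponding $H$-vertices non-adjacent, so step (b) really asks for a near-perfect matching of $\overline{H}$, which is not automatic; and distributing leftover spokes onto incident base edges is a system-of-distinct-representatives problem controlled by the degrees of $G$ (Hall's condition). I would handle this by always falling back on the guaranteed spoke $+$ base geodesic $v_a^iu_iu_l$ whenever a matching edge is unavailable, absorbing the resulting inefficiency into the slack already present in the stated bounds, and then tracking the odd leftovers hub-by-hub so that the ceilings come out exactly right. The small case $P_2\circ P_3$, where both bounds collapse to $6$ and an explicit cover of that size exists, is a useful check that the accounting is tight at the extremes.
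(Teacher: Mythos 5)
Your structural analysis is sound, and your lower-bound argument is complete and correct: fibre edges $v_a^iv_b^i$ can only be covered by $2$-geodesics lifted from $H$ into the copy $H^i$, no $2$-geodesic mixes a fibre edge with a spoke or base edge, and every other $2$-geodesic covers at most two of the $|E(G)|+nm$ spoke/base edges, giving $|gen(G\circ H)|\geq n|gen_2(H)|+\left\lceil \tfrac{1}{2}\left(|E(G)|+nm\right)\right\rceil$, which implies both stated lower bounds. Note that the paper states this corollary with no proof at all, so on this half you have already supplied more than the paper does.

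The gap is exactly where you located it, in steps (b)--(c), but your plan to ``absorb the resulting inefficiency into the slack already present in the stated bounds'' cannot work, because there is no slack: the stated upper bounds are false. Take $G=K_2$ and $H=K_{1,3}$ (a $2$-edge geodetic graph with $|gen_2(H)|=2$), so $n=2$, $m=4$, $|E(G)|=1<nm=8$, and case (i) claims $|gen(G\circ H)|\leq \lceil 4+1+4\rceil=9$. But the centre spoke $c^iu_i$ lies in exactly one $2$-geodesic of $K_2\circ K_{1,3}$, namely $c^iu_iu_j$ (the centre is adjacent to every leaf, so no spoke$+$spoke geodesic contains it); the two centre spokes thus force two geodesics that jointly contribute only one new base edge, each hub's three leaf spokes admit at most one disjoint pairing, and the fibre edges force four lifted geodesics, so $|gen(K_2\circ K_{1,3})|=4+6=10>9$. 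Case (ii) fails similarly: for $G=K_9$ plus one pendant vertex and $H=P_3$ we have $n=10$, $|E(G)|=37\geq nm=30$ and a claimed upper bound of $47$, yet the $28$ base edges avoiding the pendant's neighbour each lie only in spoke$+$base geodesics (so need $28$ separate geodesics), and the pendant hub needs two more, forcing $|gen(G\circ H)|=48$. So your step (b) (a near-perfect matching of the complement of $H$) and step (c) (a Hall-type assignment of spokes to incident base edges) are not bookkeeping details but genuine hypotheses on $H$ and $G$; without them the corollary itself is wrong, and the honest conclusion of your approach is the correct lower bound together with an upper bound that carries explicit deficiency terms (the matching defect of the complement of $H$ and the orientation defect of $G$) in place of the bounds as stated.
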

 \bibliographystyle{plain}
\bibliography{references.bib}

\begin{thebibliography}{10}

\bibitem{appert2007measuring}
Manuel Appert and Chapelon Laurent.
\newblock Measuring urban road network vulnerability using graph theory: the case of montpellier's road network.
\newblock {\em La mise en carte des risques naturels}, page 89p, 2007.

\bibitem{chartrand2002geodetic}
Gary Chartrand, Frank Harary, and Ping Zhang.
\newblock On the geodetic number of a graph.
\newblock {\em Networks: An International Journal}, 39(1):1--6, 2002.

\bibitem{chartrand2002geodeticnew}
Gary Chartrand, Edgar~M Palmer, and Ping Zhang.
\newblock The geodetic number of a graph: A survey.
\newblock {\em Congressus numerantium}, pages 37--58, 2002.

\bibitem{gajavalli2024strong}
S~Gajavalli and A~Berin Greeni.
\newblock On strong geodeticity in the lexicographic product of graphs.
\newblock {\em AIMS Mathematics}, 9(8):20367--20389, 2024.

\bibitem{harary1993geodetic}
Frank Harary, Emmanuel Loukakis, and Constantine Tsouros.
\newblock The geodetic number of a graph.
\newblock {\em Mathematical and Computer Modelling}, 17(11):89--95, 1993.

\bibitem{jiang2004structural}
Bin Jiang and Christophe Claramunt.
\newblock A structural approach to the model generalization of an urban street network.
\newblock {\em GeoInformatica}, 8:157--171, 2004.

\bibitem{manuel2017strong}
Paul Manuel, Sandi Klav{\v{z}}ar, Antony Xavier, Andrew Arokiaraj, and Elizabeth Thomas.
\newblock Strong edge geodetic problem in networks.
\newblock {\em Open Mathematics}, 15(1):1225--1235, 2017.

\bibitem{manuel2018strong}
Paul Manuel, Sandi Klav{\v{z}}ar, Antony Xavier, Andrew Arokiaraj, and Elizabeth Thomas.
\newblock Strong geodetic problem in networks.
\newblock {\em Discussiones Mathematicae Graph Theory}, 40(1):307--321, 2018.

\bibitem{santhakumaran2007edge}
AP~Santhakumaran and J~John.
\newblock Edge geodetic number of a graph.
\newblock {\em Journal of Discrete Mathematical Sciences and Cryptography}, 10(3):415--432, 2007.

\bibitem{thomson1995graph}
Robert~C Thomson and Dianne~E Richardson.
\newblock A graph theory approach to road network generalisation.
\newblock In {\em Proceeding of the 17th international cartographic conference}, pages 1871--1880, 1995.

\bibitem{xavier2020some}
D~Antony Xavier, Deepa Mathew, Santiagu Theresal, and Eddith~Sarah Varghese.
\newblock Some results on strong edge geodetic problem in graphs.
\newblock {\em Communications in Mathematics and Applications}, 11(3):403--413, 2020.

\end{thebibliography}
\end{document}